\newcommand{\ind}{\operatorname{Ind}}
\newcommand{\bx}[3]
{\put(#2,#3)
{
\begin{picture}(10,10)(0,0)
\put(0,10){\line(1,0){10}}
\put(10,0){\line(0,1){10}}
\put(0,0){\line(1,0){10}}
\put(0,0){\line(0,1){10}}
\put(2.5,1.3){#1}
\end{picture}
}}
\newtheorem{theorem}{Theorem}[section]
\newtheorem{definition}[theorem]{Definition}
\newtheorem{proposition}[theorem]{Proposition}
\newtheorem*{notation}{Notation}
\numberwithin{equation}{section}
\begin{document}

\title[Decomposing induced characters]{Decomposing induced characters of the centralizer of an n-cycle in the symmetric group on 2n elements}
\author{\sc Joseph J. Ricci}
\address{Department of Mathematics\\ University at Buffalo, SUNY \\
244 Mathematics Building\\Buffalo, NY~14260, USA}
\email{jjricci@math.buffalo.edu}

\date{January 3 2012}
 
\subjclass[2010]{Primary 20C30, Secondary 20C15}
 
\begin{abstract}
We give explicit multiplicities and formulas for multiplicities of characters appearing in the decomposition of the induced character $\ind_{C_{S_{2n}}(\sigma)}^{S_{2n}}1_C$, where $\sigma$ is an $n$-cycle, $C_{S_{2n}}(\sigma)$ is the centralizer of $\sigma$ in $S_{2n}$, and $1_C$ denotes the trivial character on $C_{S_{2n}}(\sigma)$.
\end{abstract}
\maketitle
\section{Introduction}
Throughout this paper we work only over the complex numbers, dealing with $\mathbb{C}S_n$ characters. Let $\sigma \in S_n$. In a natural way, $\sigma \in S_{2n}$ as well. Let $C:=C_{S_{2n}}(\sigma)$ be the centralizer of $\sigma$ in $S_{2n}$. Let $\psi$ be any linear character of $C$. In \cite{Hemmer}, Hemmer showed that for $m \geq n$ the induced character $\ind_{C}^{S_m} \psi$ becomes representation stable for $m = 2n$. Therefore, these induced characters arise naturally when studying braid group cohomology. (For more on representation stability and braid group cohomology, see \cite{Church}.) It was proposed that in general the decomposition of the induced character $\ind_{C}^{S_{2n}} \psi$ into irreducible characters of $S_{2n}$ was an open problem.  \\
 \indent However, the case when $\sigma = (1 \ 2 \ \cdots n)$ was studied in \cite{Jollenback} and \cite{Weyman}. In this case, $C_{S_n} (\sigma)= \langle \sigma \rangle$. Then the linear characters of $C$ are precisely the irreducible characters, which are indexed by the numbers $k=0,\ 1, \ldots, \ n-1$ and take $\sigma$ to $e^\frac{{2\pi i k}}{n}$. It was shown that for an irreducible character $\chi^\lambda$ of $S_{n}$, the multiplicity of $\chi^\lambda$ in the decomposition of $\ind_{\langle \sigma \rangle}^{S_n} \psi_k$ is equal to the number of standard Young tableaux of shape $\lambda$ with major index congruent to $k$ mod $n$. Once this is computed, one can use the Littlewood-Richardson rule or the branching rule to induce the resulting characters up to $S_{2n}$. So, in theory, the decomposition of $\ind_{C}^{S_{2n}}\psi_k$ is known, however no explicit formula is available in general. \\
 \indent In this paper we will deal with the case when $\sigma$ is an $n$-cycle of $S_n$ and $\psi_k = 1_C$ (i.e. $k=0$), the trivial character. We present a partial result toward an explicit formula as well as a formula for the multiplicities of certain irreducible $\mathbb{C}S_{2n}$ characters appearing in the decomposition.\\
\indent This paper was submitted to the University at Buffalo as the author's undergraduate honors thesis. The work was inspired by Dr. David Hemmer and the question posed in the closing of \cite{Hemmer}. Hemmer also served as the author's advisor and oversaw the progress on the paper. \\ \indent 

\section{Preliminaries}
\subsection{Partitions and Young diagrams}\label{S:parts}
\begin{definition} $\lambda = (\lambda_1,\ldots, \lambda_r)$ is a {\bf{\emph{partition}}} of $n$, written $\lambda \vdash n$, if $\lambda_i \geq \lambda_{i+1} \geq 0$, each $\lambda_i \in \mathbb{Z}$, and $\lambda_1 + \cdots + \lambda_r = n$. We say each $\lambda_i$ is a {\bf{\emph{part}}} of $\lambda$.
\end{definition}

\begin{definition} Let $\lambda = (\lambda_1,\ldots, \lambda_r) \vdash n$. Then the {\bf{\emph{Young diagram}}}, $\left[\lambda\right]$, of $\lambda$ is the set
\begin{equation}
\left[ \lambda \right] = \{(i,j) \in \mathbb{N} \times \mathbb{N} \ | \ j \leq \lambda_i \}. \notag
\end{equation}
We say each $(i,j) \in \left[ \lambda \right]$ is a {\bf{\emph{node}}} of $\left[ \lambda \right]$.
\end{definition}

If $\lambda \vdash n$, we represent $\left[ \lambda \right]$ by an array of boxes. As an example, consider the partition $\lambda = (5,3,2,2,1) \vdash 13$. Then we visualize  $\left[ \lambda \right]$ as:

\begin{figure}[ht]
\begin{center}\setlength{\unitlength}{0.015in}
\begin{picture}(50,70)(0,0)
\bx{}{0}{0}
\bx{}{0}{10}
\bx{}{10}{10}
\bx{}{0}{20}
\bx{}{10}{20}
\bx{}{0}{30}
\bx{}{10}{30}
\bx{}{20}{30}
\bx{}{0}{40}
\bx{}{10}{40}
\bx{}{20}{40}
\bx{}{30}{40}
\bx{}{40}{40}
\end{picture}
\end{center}
\end{figure}

\noindent where the upper left box is defined to be the ordered pair $(1,1)$, the upper right is $(1,5)$, the lower left is $(5,1)$, just like the entries of a matrix. \\
\indent We will often drop the bracket notation and use $\lambda$ and $\left[\lambda\right]$ interchangeably, though it will be clear by context to which we are referring. If $\lambda_i$ is a part of $\lambda \vdash n$, then $\lambda / \lambda_i$ is the partition of $n - \lambda_i$ formed by deleting $\lambda_i$ from $\lambda$. So $(5,3,2,2,1) / \lambda_2 = (5,2,2,1)$. If $b=(i,j)$ is a node in the Young diagram of $\lambda$, we will write $b \in \lambda$. Suppose $\mu = (3,2,1)$. Returning to our previous example, it is easy to see that each node $b \in \mu $ is also a node of $\lambda$. We will denote this in the obvious way, $\mu \subseteq \lambda$. With this idea in mind, we make a definition.

\begin{definition} Let $\lambda$ and $\mu$ be partitions such that $\mu \subseteq \lambda$. Then the {\bf{\emph{skew diagram}}} $\lambda / \mu$ is the set of nodes $$\xi = \lambda / \mu = \{b \in \lambda | b \not \in \mu\}.$$
\end{definition}

\noindent In the case of our example, the skew diagram $\lambda / \mu$ in this case would be:

\begin{figure}[!h]
\begin{center}\setlength{\unitlength}{0.015in}
\begin{picture}(50,70)(0,0)
\bx{}{0}{0}
\bx{}{0}{10}
\bx{}{10}{10}
\bx{}{10}{20}
\bx{}{20}{30}
\bx{}{30}{40}
\bx{}{40}{40}
\end{picture}
\end{center}
\end{figure}

\noindent One important aspect of Young diagrams that will be of great important in this paper are rim hooks.

\begin{definition} For a skew diagram $\xi$, we say the unique node $(i_0,j_0)$ such that $i_0 \leq i$ and $j_0 \geq j$ for all $(i,j) \in \xi$ is the {\bf{\emph{top node}}} of $\xi$. 
\end{definition}

\begin{definition} A {\bf{\emph{rim hook}}} is a skew diagram $\xi$ such that  if $(i,j)$ is not the top node of $\xi$ then either $(i-1,j) \in \xi$ or $(i,j+1) \in \xi$, but not both.
\end{definition}

\noindent We will say a rim $k$-hook or simply a $k$-hook is a rim hook consisting of $k$ nodes. We will say that a partition $\lambda$ \emph{has} a $k$-hook if it is possible to remove a $k$-hook from $\lambda$ and have the resulting diagram be the Young diagram of some partition $\lambda^\prime$. To each rim hook $\xi$ is assigned the leg length of $\xi$.

\begin{definition} Let $\xi$ be a rim hook. The {\bf{\emph{leg length of $\xi$}}}, denoted $ll(\xi)$, is 
$$ll(\xi) = {\text{(the number of rows in $\xi$)}} -1.$$
\end{definition}

\noindent Once again returning to our example where $\lambda = (5,3,2,2,1)$, we see that $\lambda$ has three rim 4-hooks:

\begin{figure}[ht] 
\centering 
\begin{minipage}{1.2in}{\includegraphics{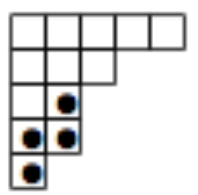}}
\end{minipage}
\qquad
\begin{minipage}{1.2in}{\includegraphics{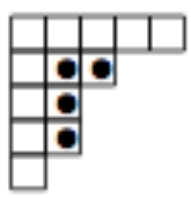}}
\end{minipage}
\qquad
\begin{minipage}{1.2in}{\includegraphics{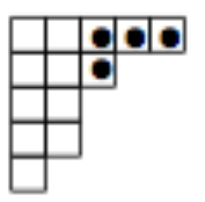}}
\end{minipage}
\end{figure}

\noindent In the first and third cases, the 4-hooks have leg length 2, while in the second case the 4-hook has leg length 1. One can also see that $\lambda$ does not have any rim 5-hooks, since it is not possible to remove a 5-hook from $\lambda$ and have the resulting diagram be the Young diagram of a partition.

\subsection{Character theory of the symmetric group}

The basics of representation and character theory will be assumed, and can be found in \cite{James}. It is well known (\cite[2.3.4, 2.4.4]{Sagan}) that there is a one-to-one correspondence between the set of partitions of $n$ and the set of irreducible characters of $S_n$. For example, $\chi^{(n)}$ corresponds to the trivial character, $\chi^{(n-1,1)}$ corresponds to the number of fixed points minus one, and $\chi^{(1^n)}$ corresponds to the sign character. Also, the conjugacy classes of $S_n$ have a natural correspondence to the partitions of $n$. If $\tau \in S_n$ is of cycle type $\lambda$, $\lambda \vdash n$, then we will denote the conjugacy class of $\tau$ by $K_\lambda$. Let $\lambda, \mu \vdash n$. Suppose one wants to evaluate the character $\chi^{\lambda}$ on the conjugacy class $K_{\mu}$, which we will denote $\chi^{\lambda}_{\mu}$. Then we have the following theorem which allows one to recursively compute $\chi^\lambda_\mu$:

\begin{theorem}[Murnaghan-Nakayama Rule] \cite[4.10.2]{Sagan} \label{mnrule}
Let $\mu=(\mu_1 , \ldots, \mu_s), \lambda \vdash n$. Then $$\chi^{\lambda}_{\mu} = \displaystyle\sum_\xi (-1)^{ll(\xi)} \chi^{\lambda / \xi}_{\mu / \mu_1}$$where the sum is taken over all rim hooks $\xi$ of $\lambda$ containing $\mu_1$ nodes.
\end{theorem}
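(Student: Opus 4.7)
My plan is to transport the claim through the Frobenius characteristic map and reduce it to an identity for Schur functions. Let $\operatorname{ch}$ denote the characteristic map from class functions on $S_n$ to the degree-$n$ part of the ring $\Lambda$ of symmetric functions, sending $\chi^\lambda \mapsto s_\lambda$ and making the power sums $p_\mu$ an orthogonal basis with $\langle p_\mu, p_\mu\rangle = z_\mu$ under the Hall inner product. Under this correspondence $\chi^\lambda_\mu = \langle p_\mu, s_\lambda\rangle$, so the content of the theorem becomes an identity in $\Lambda$.

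Factoring $p_\mu = p_{\mu_1}\cdot p_{\mu/\mu_1}$ and using the adjoint $p_r^{\perp}$ of multiplication by $p_r$ with respect to the Hall inner product, I would first write
\begin{equation*}
\chi^\lambda_\mu \;=\; \langle p_{\mu_1} p_{\mu/\mu_1},\, s_\lambda\rangle \;=\; \langle p_{\mu/\mu_1},\, p_{\mu_1}^{\perp} s_\lambda\rangle.
\end{equation*}
Expanding $p_{\mu_1}^{\perp} s_\lambda$ in the Schur basis and pairing back with $p_{\mu/\mu_1}$ re-identifies each resulting coefficient as a character value $\chi^{\lambda/\xi}_{\mu/\mu_1}$. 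Hence the whole rule reduces to the single symmetric-function identity
\begin{equation*}
p_r \cdot s_\nu \;=\; \sum_{\lambda} (-1)^{ll(\lambda/\nu)}\, s_\lambda,
\end{equation*}
where the sum runs over partitions $\lambda \supseteq \nu$ with $\lambda/\nu$ a rim $r$-hook, applied with $r = \mu_1$ and with $\nu$ playing the role of $\lambda/\xi$ in the theorem statement.

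To prove this key identity I would use the Jacobi--Trudi determinant $s_\nu = \det(h_{\nu_i - i + j})$ together with the Newton-style expansion expressing $p_r\, h_n$ as a signed combination of products of complete homogeneous symmetric functions. Multiplying the determinant by $p_r$ and distributing across rows produces a sum of determinants, each obtained from Jacobi--Trudi for $\nu$ by adding $r$ to exactly one row index; reordering rows to restore decreasing order yields the Jacobi--Trudi determinant of the partition $\lambda$ gotten from $\nu$ by a rim-hook addition, with sign equal to the sorting permutation, while configurations in which two rows coincide produce determinants that vanish. The main obstacle I anticipate is precisely this last step: systematically matching the sign of the sorting permutation with the geometric leg length $ll(\lambda/\nu)$, and checking that the row-index configurations which fail to give a valid rim $r$-hook are exactly the ones killed by determinant-vanishing. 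A cleaner but heavier alternative is to invoke Bernstein's vertex operators $B_r$ for creating Schur functions, whose commutation relations with $p_r$ package this sign bookkeeping automatically.
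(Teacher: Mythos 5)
The paper does not prove this statement: it is quoted verbatim from Sagan \cite[4.10.2]{Sagan} and used as a black box, so there is no internal proof to compare against. Your strategy, however, is essentially the standard (and essentially Sagan's own) proof: pass through the characteristic map, use $\chi^\lambda_\mu = \langle p_\mu, s_\lambda\rangle$ and the factorization $p_\mu = p_{\mu_1}p_{\mu/\mu_1}$, and reduce everything to the Pieri-type rule $p_r s_\nu = \sum_\lambda (-1)^{ll(\lambda/\nu)} s_\lambda$ over rim-hook additions. That reduction is correct and complete as you describe it.

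The one step that does not work as literally written is ``multiplying the determinant by $p_r$ and distributing across rows.'' A scalar multiplies a determinant by scaling a \emph{single} row, not by distributing over all rows, so $p_r\det(h_{\nu_i-i+j})$ is not automatically a sum of $\ell(\nu)$ determinants with $r$ added to one row index; to get that from Jacobi--Trudi you would need a separate identity expanding each $p_r h_s$ in the $h$-basis, which is exactly the nontrivial content you are trying to prove. The fix is to run the same argument on the bialternant formula $s_\nu = a_{\nu+\delta}/a_\delta$ instead: since $p_r = \sum_i x_i^r$ and $a_\alpha = \sum_w \operatorname{sgn}(w)x^{w(\alpha)}$, the identity $p_r\,a_{\nu+\delta} = \sum_j a_{\nu+\delta+r\epsilon_j}$ is a genuine one-line computation, and the remaining work is precisely the bookkeeping you anticipated --- sorting $\nu+\delta+r\epsilon_j$, discarding the terms with repeated entries, and matching the sign of the sorting permutation with $(-1)^{ll(\lambda/\nu)}$ (the number of rows of the rim hook minus one equals the number of transpositions needed to restore strictly decreasing order). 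With that substitution your outline becomes the classical proof; the vertex-operator alternative you mention also works but is heavier machinery than needed here.
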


\noindent Now, in a natural way, one can think of $S_{n-1}$ as a subgroup of $S_{n}$. Suppose $\chi^{\lambda}$ is the character of $S_n$ corresponding to $\lambda$ and $\chi^{\mu}$ is the character of $S_{n-1}$ corresponding to $\mu$. Then one can easily compute the restricted character $\chi^{\lambda} \downarrow_{S_{n-1}}$ and the induced character $\ind_{S_{n-1}}^{S_n} \chi^{\mu}$ using the Branching Rule. 

\begin{definition}
Let $\lambda \vdash n$. We say an {\bf{\emph{inner corner}}} of $\left[ \lambda \right]$ is a node $(i,j) \in \left[ \lambda \right]$ such that $\left[ \lambda \right] - \{(i,j)\}$ is the Young diagram of some partition of $n-1$. We denote any such partition by $\lambda^-$. We say an {\bf{\emph{outer corner}}} is a node $(i,j) \not \in \left[ \lambda \right]$ such that $\left[ \lambda \right] \cup \{(i,j)\}$ is the Young diagram of some partition of $n+1$. We denote any such partition by $\lambda^+$. 
\end{definition}

\begin{theorem}[Branching Rule] \cite[2.8.3]{Sagan} \label{branchingrule}
Let $\mu \vdash n-1, \lambda \vdash n$. Then $$\chi^{\lambda} \downarrow_{S_{n-1}} = \displaystyle\sum_{\lambda^-} \chi^{\lambda^-}$$
and
$$\ind_{S_{n-1}}^{S_n} \chi^{\mu} = \displaystyle\sum_{\mu^+} \chi^{\mu^+}.$$ 
\end{theorem}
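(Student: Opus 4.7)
The plan is to derive both formulas from the Murnaghan--Nakayama rule (Theorem~\ref{mnrule}) together with Frobenius reciprocity. The restriction formula falls out of Murnaghan--Nakayama in essentially a single step, and the induction formula then follows from the restriction formula by the standard adjunction argument, so I would handle the two halves in that order.

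For the restriction formula, I would verify equality of class functions on $S_{n-1}$. Viewing $S_{n-1}$ as the subgroup of $S_n$ fixing the symbol $n$, any permutation of cycle type $\nu = (\nu_1, \ldots, \nu_s) \vdash n-1$ in $S_{n-1}$ has cycle type $(1, \nu_1, \ldots, \nu_s) \vdash n$ when viewed in $S_n$. Applying Theorem~\ref{mnrule} with the $1$-cycle taken as the distinguished $\mu_1$ yields
\begin{equation}
\chi^\lambda_{(1, \nu)} = \sum_{\xi} (-1)^{ll(\xi)} \chi^{\lambda/\xi}_\nu, \notag
\end{equation}
the sum ranging over rim $1$-hooks $\xi$ of $\lambda$. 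A rim $1$-hook is simply a single node $(i,j) \in \lambda$ whose removal yields the Young diagram of a partition, which is precisely the definition of an inner corner; each such hook has leg length $0$. Hence the sum collapses to $\sum_{\lambda^-} \chi^{\lambda^-}_\nu$, and since $\nu$ was arbitrary the restriction formula follows by comparing values on every conjugacy class of $S_{n-1}$.

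For the induction formula, Frobenius reciprocity gives
\begin{equation}
\langle \ind_{S_{n-1}}^{S_n} \chi^\mu, \chi^\lambda \rangle_{S_n} = \langle \chi^\mu, \chi^\lambda \downarrow_{S_{n-1}} \rangle_{S_{n-1}} = \sum_{\lambda^-} \langle \chi^\mu, \chi^{\lambda^-} \rangle_{S_{n-1}}, \notag
\end{equation}
where the second equality uses the restriction formula just established. By orthogonality of the irreducible characters of $S_{n-1}$, this inner product equals $1$ if $\mu = \lambda^-$ for some inner corner of $\lambda$ (equivalently $\lambda = \mu^+$ for some outer corner of $\mu$), and $0$ otherwise. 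Expanding $\ind_{S_{n-1}}^{S_n} \chi^\mu$ in the basis of irreducible characters of $S_n$ then produces $\sum_{\mu^+} \chi^{\mu^+}$, as required.

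I do not anticipate any real obstacle here. The only step warranting explicit verification is the identification of rim $1$-hooks of $\lambda$ with the inner corners of $\lambda$: a single node $\{(i,j)\} \subseteq \lambda$ is a valid skew diagram precisely when $(i,j)$ is an inner corner, and every such one-node hook has leg length $0$, so the sign $(-1)^{ll(\xi)}$ contributes no cancellation. Every other step is purely formal.
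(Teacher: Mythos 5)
The paper offers no proof of this theorem --- it is quoted from Sagan --- so there is no in-paper argument to compare against; your derivation is a standard one and is essentially correct. The overall structure (restriction via Murnaghan--Nakayama, then induction via Frobenius reciprocity and orthogonality of irreducible characters) is sound, and your identification of the rim $1$-hooks of $\lambda$ with its inner corners, each of leg length $0$, is exactly right, as is the observation that $\mu=\lambda^-$ for some inner corner of $\lambda$ if and only if $\lambda=\mu^+$ for some outer corner of $\mu$.

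The one step you should shore up is the application of Theorem~\ref{mnrule} ``with the $1$-cycle taken as the distinguished $\mu_1$.'' As stated in the paper, the rule applies to a partition $\mu=(\mu_1,\ldots,\mu_s)$ and removes a rim hook of size $\mu_1$, the \emph{largest} part; the tuple $(1,\nu_1,\ldots,\nu_s)$ is not a partition unless every $\nu_i\le 1$. To peel off the fixed point first you need the order-independent (composition) form of the Murnaghan--Nakayama rule, which is true and standard but is a strictly stronger statement than Theorem~\ref{mnrule} as printed; either cite that form explicitly or rework the restriction half so that hooks are removed in weakly decreasing order of size. A softer caveat: in most developments, including Sagan's, the branching rule is established before and independently of Murnaghan--Nakayama, and some proofs of the latter pass through the former. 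Since you are taking Murnaghan--Nakayama as a black box from the literature there is no actual circularity, but your logical order reverses the usual one, which is worth acknowledging.
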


As an example, suppose $\lambda = (3,3,2)$ and $\mu = (5,2)$. Using Theorem \ref{branchingrule} we calculate
\begin{align}
\chi^{(3,3,2)} \downarrow_{S_7} &= \chi^{(3,2,2)} + \chi^{(3,3,1)} \notag \\
\ind_{S_7}^{S_8} \chi^{(5,2)} &= \chi^{(6,2)} + \chi^{(5,3)} + \chi^{(5,2,1)}. \notag
 \end{align} 

\section{The decomposition of $\phi$} 
\subsection{Some preliminary results} Recall in the introduction of this paper we defined $C:=C_{S_{2n}}(\sigma)$, with $\sigma = (1 \ 2 \cdots n)$. One can compute \cite[4.3]{Dummit} $C \cong \langle \sigma \rangle \times S_n$. Keeping this in mind we note the following notation:

\begin{notation} For $\tau \in C$, we will write $\tau = (\sigma^k, \pi)$ for $k \in \mathbb{Z}$ and $\pi \in S_n$. 
\end{notation}

\noindent Also, if $\lambda = (\lambda_{1}, \ldots, \lambda_{r}) \vdash n$ then $(n, \lambda):=(n, \lambda_1, \ldots, \lambda_r) \vdash 2n$ and $(\lambda, 1^n):=(\lambda_1, \ldots, \lambda_r, 1^n) \vdash 2n$. 

\begin{notation}When evaluating any character $\chi$ on the conjugacy class of $S_{2n}$ corresponding to $(n,\lambda)$ or $(\lambda, 1^n)$, we will write $\chi_{(n,\lambda)}$ and $\chi_{(\lambda, 1^n)}$ respectively. 
\end{notation}

\noindent For the remainder of this paper, we will write $\phi = \ind_C^{S_{2n}} 1$. 

\begin{proposition}\label{P:1}
Let $n \geq 1$. Let $\chi^{(2n)}$ be the irreducible character of $S_{2n}$ corresponding to the partition $(2n)$. Then
\begin{align}
\langle \phi, \chi^{(2n)} \rangle_{S_{2n}} &= 1. \notag
\end{align}
\end{proposition}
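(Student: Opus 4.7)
The plan is to apply Frobenius reciprocity directly. Since $\phi = \ind_C^{S_{2n}} 1_C$ is an induced character, we have
\begin{equation}
\langle \phi, \chi^{(2n)} \rangle_{S_{2n}} = \langle \ind_C^{S_{2n}} 1_C, \chi^{(2n)} \rangle_{S_{2n}} = \langle 1_C, \chi^{(2n)} \downarrow_C \rangle_C. \notag
\end{equation}

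Next I would invoke the fact, noted just before in the character-theory subsection, that $\chi^{(2n)}$ is the trivial character of $S_{2n}$. Consequently its restriction to any subgroup, in particular to $C = C_{S_{2n}}(\sigma)$, is the trivial character $1_C$. Hence the computation reduces to $\langle 1_C, 1_C \rangle_C = 1$, which gives the claim.

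Essentially there is no obstacle here: the proposition is a one-line consequence of Frobenius reciprocity together with the observation that the trivial character restricts to the trivial character. The only thing worth being slightly careful about is making sure the reader recognizes $\chi^{(2n)}$ as the trivial character of $S_{2n}$ (already stated in the preliminaries) and that the inner product on $C$ of the trivial character with itself equals $1$ because $\chi^{(2n)}$ (and therefore $1_C$) is irreducible on $C$ in the trivial sense.
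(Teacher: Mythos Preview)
Your proof is correct and is essentially identical to the paper's own argument: apply Frobenius reciprocity, observe that $\chi^{(2n)}$ is the trivial character so $\chi^{(2n)}\downarrow_C = 1_C$, and conclude $\langle 1_C, 1_C\rangle_C = 1$.
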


\begin{proof}

Using Frobenius reciprocity, we have 
\begin{equation}
\langle \phi, \chi^{(2n)} \rangle_{S_{2n}} = \langle 1_C, \chi^{(2n)}\downarrow_C \rangle_C. \notag
\end{equation}
But since $\chi^{(2n)}$ is the trivial character, $\chi^{(2n)} \downarrow C = 1_C$, so we have:
\begin{equation}
\langle \phi, \chi^{(2n)} \rangle_{S_{2n}} = 1. \notag
\end{equation}
\end{proof}

\begin{proposition}\label{P:2}
Let $n \geq 2$. Let $\chi^{(2n-1,1)}$ be the irreducible character of $S_{2n}$ corresponding to $(2n-1,1)$. Then
\begin{equation}
\langle \phi, \chi^{(2n-1,1)} \rangle_{S_{2n}} = 1. \notag
\end{equation}
\end{proposition}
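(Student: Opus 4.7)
The plan is to mirror the strategy used for Proposition 3.1: apply Frobenius reciprocity and then evaluate a sum over $C$ directly, exploiting the simple character-theoretic description of $\chi^{(2n-1,1)}$.

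By Frobenius reciprocity,
\begin{equation}
\langle \phi, \chi^{(2n-1,1)} \rangle_{S_{2n}} = \langle 1_C, \chi^{(2n-1,1)}\downarrow_C \rangle_C = \frac{1}{|C|}\sum_{\tau \in C} \chi^{(2n-1,1)}(\tau). \notag
\end{equation}
Since $\chi^{(2n-1,1)}$ is the ``number of fixed points minus one'' character on $S_{2n}$ (as recalled in the preliminaries), I only need to count fixed points. Under the isomorphism $C \cong \langle \sigma \rangle \times S_n$, the cyclic factor $\langle \sigma \rangle$ acts on $\{1,\ldots,n\}$ and the $S_n$ factor acts on $\{n+1,\ldots,2n\}$, so for $\tau = (\sigma^k, \pi)$ the fixed-point count on $\{1,\ldots,2n\}$ splits as a sum of fixed points of $\sigma^k$ on $\{1,\ldots,n\}$ plus fixed points of $\pi$ on $\{n+1,\ldots,2n\}$.

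Next, I would record the two elementary counts I need. First, because $\sigma$ is an $n$-cycle, $\sigma^k$ is either the identity (when $k \equiv 0 \pmod n$) or a product of cycles all of length greater than $1$; consequently $\sigma^k$ fixes $n$ points when $k=0$ and $0$ points otherwise. Second, a standard Burnside/orbit-counting argument (or direct bijective count) gives $\sum_{\pi \in S_n} \mathrm{fp}(\pi) = n!$. Now I would split the sum:
\begin{equation}
\sum_{\tau \in C} \chi^{(2n-1,1)}(\tau) = \sum_{k=0}^{n-1}\sum_{\pi \in S_n}\bigl(\mathrm{fp}(\sigma^k) + \mathrm{fp}(\pi) - 1\bigr), \notag
\end{equation}
and evaluate the three pieces: the first is $(n \cdot n!) \cdot 1 = n\cdot n!$, the second is $n \cdot n!$, and the third is $|C| = n \cdot n!$. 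The total is $n \cdot n! + n \cdot n! - n \cdot n! = n\cdot n!$, which upon dividing by $|C| = n \cdot n!$ yields $1$.

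The computation is essentially bookkeeping; the one place care is required is the identification of how $C$ acts on $\{1,\ldots,2n\}$, specifically that the fixed points on the two halves are genuinely independent so that the fixed-point character of $S_{2n}$ restricts to $C$ as the sum of the corresponding characters of the two factors. Everything else reduces to the two elementary counts above.
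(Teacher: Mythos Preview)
Your proof is correct and follows essentially the same approach as the paper: Frobenius reciprocity together with the ``fixed points minus one'' description of $\chi^{(2n-1,1)}$, then a direct count over $C\cong\langle\sigma\rangle\times S_n$. The only cosmetic difference is that where the paper writes $\mathrm{fp}(\pi)=\chi^{(n-1,1)}(\pi)+1$ and invokes orthogonality $\langle\chi^{(n)},\chi^{(n-1,1)}\rangle_{S_n}=0$, you use the equivalent Burnside identity $\sum_{\pi\in S_n}\mathrm{fp}(\pi)=n!$.
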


\begin{proof}

First note that this character records the number points fixed by a permutation and subtracts 1. Using Frobenius reciprocity, we expand the inner product as follows:
\begin{equation}
\langle \phi, \chi^{(2n-1,1)} \rangle_{S_{2n}} = \langle 1_C, \chi^{(2n-1,1)} \downarrow_{C} \rangle_C = \frac{1}{n \ n!}\displaystyle\sum_{\tau \in C} \chi^{(2n-1,1)} (\tau). \label{1}
\end{equation}
By previous remarks made at the beginning of this section, the last term in (\ref{1}) becomes:
\begin{equation}
\frac{1}{n \ n!}\displaystyle\sum_{k=0}^{n-1} \displaystyle\sum_{\pi \in S_n} \chi^{(2n-1,1)} ((\sigma^k,\pi)). \notag
\end{equation}
When $k=0$, $(\sigma^k,\pi) = (1,\pi)$ and $(1,\pi)$ fixes $n + \chi^{(n-1,1)}(\pi) +1$ points. When $k \not = 0$, $(\sigma^k, \pi)$ fixes $\chi^{(n-1,1)}(\pi)+1$ points, giving:
\begin{align}
\frac{1}{n \ n!}\displaystyle\sum_{k=0}^{n-1} \displaystyle\sum_{\pi \in S_n} \chi^{(2n-1,1)} ((\sigma^k,\pi)) &= \frac{1}{n \ n!}\left[\displaystyle\sum_{\pi \in S_n} \left(n+ \chi^{(n-1,1)}(\pi)\right) + (n-1)\displaystyle\sum_{\pi \in S_n} \chi^{(n-1,1)}(\pi)\right] \notag \\
&= \frac{1}{n \ n!}\left[\displaystyle\sum_{\pi \in S_n} n + \displaystyle\sum_{\pi \in S_n} \chi^{(n-1,1)}(\pi) +(n-1) \displaystyle\sum_{\pi \in S_n} \chi^{(n-1,1)}(\pi)\right] \notag \\
&= \frac{1}{n \ n!}\left[n \ n! + n \ n! \langle \chi^{(n)} , \chi^{(n-1,1)} \rangle_{S_n}\right]. \label{2}
\end{align}
But since both $\chi^{(n)}$ and $\chi^{(n-1,1)}$ are irreducible, their inner product is 0. So (\ref{2}) becomes:
\begin{equation}
\frac{1}{n \ n!} n \ n! = 1. \notag
\end{equation}
\end{proof}

\begin{proposition}\label{P:3}
Let $n \geq 2$. Let $\chi^{(n,n)}$ be the irreducible character of $S_{2n}$ corresponding to $(n,n)$. Then
\begin{align}
\langle \phi, \chi^{(n,n)} \rangle_{S_{2n}} &= 1. \notag
\end{align}
\end{proposition}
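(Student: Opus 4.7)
The plan is to follow the Frobenius reciprocity template of Propositions~\ref{P:1} and~\ref{P:2}. One starts with
\[
\langle \phi, \chi^{(n,n)} \rangle_{S_{2n}}
  = \langle 1_C, \chi^{(n,n)} \downarrow_C \rangle_C
  = \frac{1}{n\,n!}\sum_{k=0}^{n-1}\sum_{\pi \in S_n} \chi^{(n,n)}\bigl((\sigma^k,\pi)\bigr).
\]
Unlike $\chi^{(2n-1,1)}$, the character $\chi^{(n,n)}$ has no elementary fixed-point interpretation, so my plan is to evaluate each inner character value with the Murnaghan--Nakayama rule (Theorem~\ref{mnrule}), exploiting the freedom to remove the cycles of $\sigma^k$ before those of $\pi$.

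For fixed $k$, let $d = \gcd(n,k)$ and $\ell = n/d$, so $\sigma^k$ has cycle type $(\ell^{d})$ (with $\ell = 1$, $d = n$ in the case $k=0$). Writing the cycle type of $(\sigma^k,\pi)$ with the $d$ copies of $\ell$ listed first and iterating Theorem~\ref{mnrule} exactly $d$ times peels off those cycles and yields an expansion
\[
\chi^{(n,n)}\bigl((\sigma^k,\pi)\bigr)
  \;=\; \sum_{\rho \,\vdash\, n} c(\rho;k)\,\chi^{\rho}(\pi),
\]
where $c(\rho;k)$ is the signed count (by total leg length) of length-$d$ sequences of rim $\ell$-hook removals from $(n,n)$ terminating at $\rho$. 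Summing over $\pi \in S_n$ and using
\[
\sum_{\pi \in S_n} \chi^{\rho}(\pi) \;=\; n!\,\langle 1,\chi^{\rho}\rangle_{S_n} \;=\; n!\,\delta_{\rho,(n)}
\]
collapses the right-hand side to $c((n);k)\cdot n!$, so the proposition reduces to verifying that $c((n);k) = 1$ for every $k \in \{0,1,\ldots,n-1\}$.

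This combinatorial verification is the only real obstacle, and I expect it to be handled by a uniqueness argument. Any intermediate shape along a sequence terminating at $(n)$ must itself contain $(n)$, hence has the form $(n,j)$; from $(n,j)$, the only rim $\ell$-hook whose removal again leaves a shape of the form $(n,j')$ is the rightmost $\ell$ boxes $(2,j-\ell+1),\ldots,(2,j)$ of the second row---any rim $\ell$-hook including a node of row $1$ strictly shortens row $1$, and subsequent rim-hook removals can only shorten rows further, so such a choice could not lead back to $(n)$. The resulting unique sequence
\[
(n,n)\;\to\;(n,n-\ell)\;\to\;\cdots\;\to\;(n,\ell)\;\to\;(n,0) \;=\; (n)
\]
consists of $d$ horizontal strips of leg length $0$, contributing overall sign $+1$ and giving $c((n);k) = 1$. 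Thus $\sum_{\pi}\chi^{(n,n)}((\sigma^k,\pi)) = n!$ for every $k$, and the entire inner product evaluates to $\frac{1}{n\,n!}\cdot n\cdot n! = 1$. The main point that will require care is the rim-hook case analysis on the two-row shape $(n,j)$, in particular ruling out the ``L-shaped'' rim hooks crossing through $(1,n)$.
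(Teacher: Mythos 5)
Your proposal is correct and follows essentially the same route as the paper: Frobenius reciprocity over $C \cong \langle\sigma\rangle \times S_n$, then the Murnaghan--Nakayama rule to peel off the cycles of $\sigma^k$ and orthogonality with $\chi^{(n)}$ to collapse the sum over $\pi \in S_n$. The only difference is organizational: the paper treats $k=0$, $\gcd(n,k)=1$, and $\gcd(n,k)\neq 1$ as three separate cases (using the branching rule for $k=0$ and the explicit pair of rim $n$-hooks when $\gcd(n,k)=1$), whereas your uniqueness argument for the sequence of rim $\ell$-hook removals handles all $k$ uniformly and in fact supplies the justification, only asserted in the paper, that $\chi^{(n)}$ appears with coefficient exactly $1$ when $\gcd(n,k)\neq 1$.
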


\begin{proof}
Throughout, let $d_k=gcd(n,k)$. Using Frobenius reciprocity, we write:
\begin{equation}
\langle \phi, \chi^{(n,n)} \rangle_{S_{2n}} = \langle 1_C, \chi^{(n,n)} \downarrow_{C} \rangle_{C} = \frac{1}{n \ n!}\displaystyle\sum_{k=0}^{n-1} \displaystyle\sum_{\pi \in S_n} \chi^{(n,n)} ((\sigma^k,\pi)). \notag
\end{equation}
We break the sum up into three pieces: one for $k=0$, one for $d_k=1$ (of which there are $\varphi(n)$ such $k$, where $\varphi$ denotes Euler's totient function) and one for $d_k \not = 1$:
\begin{align}
\langle \phi, \chi^{(n,n)} \rangle_{S_{2n}} &=
\frac{1}{n \ n!}\left[ \displaystyle\sum_{\pi \in S_n} \chi^{(n,n)}((1,\pi)) + \varphi(n)\displaystyle\sum_{\pi \in S_n} \chi^{(n,n)}((\sigma,\pi)) + \displaystyle\sum_{\begin{subarray}{1} 
1<k<n\\
d_k \not = 1
\end{subarray}} \displaystyle\sum_{\pi \in S_n} \chi^{(n,n)}((\sigma^k, \pi))\right]. \notag \\
\intertext{In order to use Theorem \ref{mnrule}, we sum over all partitions of $n$ and rewrite the sum as:}
\langle \phi, \chi^{(n,n)} \rangle_{S_{2n}} &= \frac{1}{n \ n!}\left[n! \langle \chi^{(n)}, \chi^{(n,n)}\downarrow_{S_n} \rangle_{S_n} + \varphi(n) \displaystyle\sum_{\lambda \vdash n} \chi^{(n,n)}_{(n,\lambda)} |K_{\lambda}|+  \displaystyle\sum_{\begin{subarray}{1} 
1<k<n \\ 
d_k \not = 1 
\end{subarray}}\displaystyle\sum_{\lambda \vdash n} \chi^{(n,n)}_{((\frac{n}{d_k})^{d_k},\lambda)} |K_{\lambda}|\right]. \label{3}
\end{align}
By Theorem \ref{branchingrule}, we write:
\begin{equation}
\chi^{(n,n)}\downarrow_{S_n} = \chi^{(n)} + \displaystyle\sum_
{\begin{subarray}{1}
\lambda \vdash n \\ 
\lambda \not = (n) 
\end{subarray}}
a_\lambda \chi^\lambda \notag
\end{equation}
where $a_\lambda \in \{0, 1, 2,\ldots\}$. Then by linearity, we have 
\begin{equation}
\langle \chi^{(n)}, \chi^{(n,n)}\downarrow_{S_n} \rangle_{S_n} = \langle \chi^{(n)}, \chi^{(n)} \rangle_{S_n} + \displaystyle\sum_
{\begin{subarray}{1}
\lambda \vdash n \\ 
\lambda \not = (n) 
\end{subarray}}
a_\lambda \langle \chi^{(n)}, \chi^{\lambda} \rangle_{S_n} = 
\langle \chi^{(n)}, \chi^{(n)} \rangle_{S_n} = 1\label{4}
\end{equation}
since all the $\chi^{\lambda}$ are irreducible. Using Theorem \ref{mnrule}, 
\begin{equation}
\chi^{(n,n)}_{(n,\lambda)} = \chi^{(n)}_\lambda - \chi^{(n-1,1)}_\lambda \notag
\end{equation}
so that 
\begin{align}
\displaystyle\sum_{\lambda \vdash n} \chi^{(n,n)}_{(n,\lambda)} |K_{\lambda}| 
&= \displaystyle\sum_{\lambda \vdash n} \left( \chi^{(n)}_\lambda - \chi^{(n-1,1)}_\lambda \right) |K_{\lambda}| \notag \\
&= \displaystyle\sum_{\lambda \vdash n} \chi^{(n)}_\lambda |K_{\lambda}| - \displaystyle\sum_{\lambda \vdash n} \chi^{(n-1,1)}_\lambda |K_{\lambda}|  \notag \\
&= n! \langle \chi^{(n)}, \chi^{(n)} \rangle_{S_n} - n! \langle \chi^{(n)}, \chi^{(n-1,1)} \rangle_{S_n} \notag \\
&= n!. \label{5}
\end{align}

\noindent Now let $d_k \not = 1$, for some $k$. Again with Theorem \ref{mnrule}, we write 
\begin{equation}
\chi^{(n,n)}_{((\frac{n}{d_k})^{d_k},\lambda)} = \chi^{(n)}_\lambda + \displaystyle\sum_
{\begin{subarray}{1}
\mu \vdash n \\ 
\mu \not = (n) 
\end{subarray}}
c_\mu \chi^{\mu}_{\lambda} \notag
\end{equation} 
where $c_\lambda \in \mathbb{Z}$.  Then 
\begin{align}
\displaystyle\sum_{\lambda \vdash n} \chi^{(n,n)}_{((\frac{n}{d_k})^{d_k},\lambda)} |K_{\lambda}| 
&= \displaystyle\sum_{\lambda \vdash n} \chi^{(n)}_{\lambda}|K_{\lambda}| + \displaystyle\sum_{\lambda \vdash n}\displaystyle\sum_
{\begin{subarray}{1}
\mu \vdash n \\ 
\mu \not = (n) 
\end{subarray}}
c_\mu \chi^{\mu}_{\lambda} |K_{\lambda}| \notag \\
&= n! \langle \chi^{(n)}, \chi^{(n)} \rangle_{S_n} + \displaystyle\sum_
{\begin{subarray}{1}
\mu \vdash n \\ 
\mu \not = (n) 
\end{subarray}} 
n! c_\mu \langle \chi^{(n)}, \chi^{\mu} \rangle_{S_n}  \notag \\
&=n!. \label{6}
\end{align} 

\noindent We note that there are $n-\varphi(n)-1$ numbers $k$ strictly between 1 an $n$ so that $d_k \not = 1$, so substituting (\ref{4}), (\ref{5}), and(\ref{6}) into (\ref{3}) we have:
\begin{equation}
\langle \phi, \chi^{(n,n)} \rangle_{S_{2n}} = \frac{1}{n \ n!} \left[n! + \varphi(n)n! + (n-\varphi(n) -1)n! \right] = \frac{1}{n \ n!} n \ n! = 1. \notag
\end{equation}
\end{proof}

In the case of $n=2$ it turns out that Propositions \ref{P:1}, \ref{P:2}, and \ref{P:3} give a full decomposition. That is:
\begin{equation}
\ind_{C_{S_4}((12))}^{S_4} 1_C = \chi^{(4)} + \chi^{(3,1)} + \chi^{(2,2)}. \notag
\end{equation}

\noindent We notice that our first three results all showed that there are certain irreducible characters appearing in the decomposition of $\phi$ that have constant or stable multiplicities, independent of $n$. Our next result shows that this is not the case for all constituents, but a closed form formula for the multiplicity is known in some cases.

\begin{proposition}Let $n \geq 2$. Let $\chi^{(2n-2,2)}$ be the irreducible character of $S_{2n}$ corresponding to $(2n-2,2)$. Then 
\begin{equation}
\langle \phi, \chi^{(2n-2,2)} \rangle_{S_{2n}} =  
\begin{cases}
\frac{n}{2} 	&\text{if $n$ is even}\\
\frac{n-1}{2} &\text{if $n$ is odd}
\end{cases} \notag
\end{equation}
\end{proposition}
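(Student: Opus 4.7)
The plan is to proceed as in Proposition~\ref{P:3}, reducing via Frobenius reciprocity to
\[ \langle \phi, \chi^{(2n-2,2)}\rangle = \frac{1}{n\, n!}\sum_{k=0}^{n-1}\sum_{\pi\in S_n}\chi^{(2n-2,2)}\bigl((\sigma^k,\pi)\bigr), \]
but rather than run Murnaghan-Nakayama block by block, I would use the explicit formula
\[ \chi^{(2n-2,2)}(\tau) = \binom{f(\tau)}{2} + t(\tau) - f(\tau), \]
where $f(\tau)$ is the number of fixed points of $\tau$ and $t(\tau)$ is the number of 2-cycles in its cycle decomposition. This is immediate from the fact that the character of the permutation representation of $S_{2n}$ on $\binom{[2n]}{2}$ equals $\binom{f}{2}+t$, decomposes as $\chi^{(2n)}+\chi^{(2n-1,1)}+\chi^{(2n-2,2)}$, and that $\chi^{(2n-1,1)}(\tau)=f(\tau)-1$.

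For $\tau=(\sigma^k,\pi)$, the cycle type of $\sigma^k$ on $\{1,\dots,n\}$ is $(n/d_k)^{d_k}$ with $d_k=\gcd(n,k)$, so $\sigma^k$ contributes $n$ fixed points exactly when $k=0$ and $n/2$ two-cycles exactly when $n$ is even and $k=n/2$; in every other case it supplies neither. Hence
\[ f(\tau) = n\cdot[k=0] + f(\pi), \qquad t(\tau) = \tfrac{n}{2}\cdot[k = n/2 \text{ and } n \text{ even}] + t(\pi). \]

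Next I would split the $k$-sum into three blocks ($k=0$; $k=n/2$ when $n$ is even; every other $k$) and evaluate each using the standard identities $\sum_{\pi\in S_n}f(\pi)=n!$, $\sum_{\pi\in S_n}\binom{f(\pi)}{2}=n!/2$, and $\sum_{\pi\in S_n}t(\pi)=n!/2$. Expanding $\binom{n+f(\pi)}{2}=\binom{n}{2}+n f(\pi)+\binom{f(\pi)}{2}$ in the $k=0$ block produces $n!\binom{n}{2}$ after cancellation; the generic block with $k\notin\{0,n/2\}$ collapses to $n!/2+n!/2-n!=0$; and for $n$ even the single exceptional block $k=n/2$ contributes $(n/2)\,n!$. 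Dividing by $n\,n!$ then gives $(n-1)/2$ when $n$ is odd and $(n-1)/2+\tfrac{1}{2}=n/2$ when $n$ is even, as claimed.

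The only real subtlety is isolating the contribution of the exceptional value $k=n/2$, which is the unique source of the parity dependence; everything else is bookkeeping. An alternative would be to apply Theorem~\ref{mnrule} directly to $\chi^{(2n-2,2)}_{((n/d_k)^{d_k},\lambda)}$ for each divisor of $n$ and enumerate the rim $(n/d_k)$-hooks of $(2n-2,2)$, but that route is noticeably more tedious and handles the small cases $n=2,3$ less cleanly than the explicit-formula approach above.
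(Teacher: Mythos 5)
Your proposal is correct, but it takes a genuinely different route from the paper's. The paper also begins with Frobenius reciprocity, but then splits the sum over $k$ according to whether $k=0$, $\gcd(n,k)=1$, or $\gcd(n,k)\neq 1$, and evaluates each block by extracting the coefficient of $\chi^{(n)}$ via the Branching Rule and the Murnaghan--Nakayama rule (counting ways to strip rim hooks from $(2n-2,2)$), finishing with orthogonality of irreducible characters. You instead use the closed-form evaluation $\chi^{(2n-2,2)}(\tau)=\binom{f(\tau)}{2}+t(\tau)-f(\tau)$ coming from the permutation character on $2$-subsets, together with the moment identities $\sum_{\pi}f(\pi)=n!$, $\sum_{\pi}\binom{f(\pi)}{2}=n!/2$, $\sum_{\pi}t(\pi)=n!/2$ (valid for $n\geq 2$), and you split the $k$-sum as $k=0$, $k=n/2$ (when $n$ is even), and the rest. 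I checked the arithmetic: the $k=0$ block gives $n!\binom{n}{2}$, the generic block vanishes, the exceptional block gives $(n/2)\,n!$, and the totals match, including the base case $n=2$. Your partition of the $k$-sum is actually sharper than the paper's, since it isolates the one element $\sigma^{n/2}$ that is responsible for the parity dichotomy, whereas the paper's grouping by $\gcd(n,k)\neq 1$ forces it to observe separately that only the divisor $d_k=n/2$ contributes. The trade-off is generality: the explicit fixed-point/transposition formula is special to the shapes $(2n)$, $(2n-1,1)$, $(2n-2,2)$, so your method does not extend to the paper's subsequent theorem on $(2n-k,k)$ for general $k$, where the hook-removal counting is the only available tool; for the present proposition, however, your argument is shorter and more transparent.
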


\begin{proof}
Throughout, $d_k=gcd(n,k)$. Using Frobenius reciprocity we write:
\begin{equation}
\langle \phi, \chi^{(2n-2,2)} \rangle_{S_{2n}} = \langle 1_C, \chi^{(2n-2,2)} \downarrow_C \rangle_C = \frac{1}{n \ n!}\displaystyle\sum_{k=0}^{n-1}\displaystyle\sum_{\pi \in S_n} \chi^{(2n-2,2)} ((\sigma^k, \pi)). \notag
\end{equation}
If $n=2$, we are done, by Proposition \ref{P:3}. Throughout the rest of the proof we assume $n \geq 3$.  As in the proof of Proposition \ref{P:3}, we break the sum into three pieces:
\begin{align}
\langle \phi, \chi^{(2n-2,2)} \rangle_{S_{2n}}
&= \frac{1}{n \ n!}\left[ \displaystyle\sum_{\pi \in S_n} \chi^{(2n-2,2)}((1,\pi)) + \varphi(n)\displaystyle\sum_{\pi \in S_n} \chi^{(2n-2,2)}((\sigma,\pi)) + \displaystyle\sum_
{\begin{subarray}{1} 
1<k<n \\ 
d_k \not = 1 
\end{subarray}}\displaystyle\sum_{\pi \in S_n} \chi^{(2n-2,2)}((\sigma^k, \pi))\right] \notag \\
&= \frac{1}{n \ n!} \left[ n! \langle \chi^{(n)} , \chi^{(2n-2,2)} \rangle_{S_n} + \varphi(n) \displaystyle\sum_{\lambda \vdash n} \chi^{(2n-2,2)}_{(n, \lambda)} |K_\lambda| + \displaystyle\sum_{\begin{subarray}{1} 
1<k<n \\
d_k \not = 1
\end{subarray}}\displaystyle\sum_{\lambda \vdash n} \chi^{(2n-2,2)}_{((\frac{n}{d_k})^{d_k},\lambda)} |K_\lambda| \right]. \label{7}
\end{align}

\noindent From Theorem \ref{branchingrule}, we have

\begin{equation}
\langle \chi^{(n)}, \chi^{(2n-2,2)} \rangle_{S_n} =\binom{n}{2}. \label{8}
\end{equation}

\noindent Using Theorem \ref{mnrule} we write
\begin{equation} 
\chi^{(2n-2,2)}_{(n, \lambda)} = \chi^{(n-2,2)}_{\lambda} \notag
\end{equation}
so that 
\begin{equation}
\displaystyle\sum_{\lambda \vdash n} \chi^{(2n-2,2)}_{(n, \lambda)} |K_\lambda| = \displaystyle\sum_{\lambda \vdash n} \chi^{(n-2,2)}_{\lambda} |K_\lambda| = n! \langle \chi^{(n)} , \chi^{(n-2,2)} \rangle_{S_n} = 0. \label{9}
\end{equation}

\noindent When $n$ is even, $\frac{n}{2}$ divides $n$. Then $d_{\frac{n}{2}} = \frac{n}{2}$. We can then remove the 2-hook from bottom row of $(2n-2,2)$, and then successively remove $\frac{n}{2} - 1$ hooks of length 2 from the top row of $(2n-2,2)$. There are $\binom{\frac{n}{2}}{1}=\frac{n}{2}$ ways to do this. We combine this with Theorem \ref{mnrule} to see that 
\begin{equation}
\displaystyle\sum_{\begin{subarray}{1} 
1<k<n \\ 
d_k \not = 1
\end{subarray}} 
\chi^{(2n-2,2)}_{((\frac{n}{d_k})^{d_k},\lambda)} = \frac{n}{2}\chi^{(n)} + \displaystyle\sum_{\begin{subarray}{1}
\mu \vdash n \\
\mu \not = (n) 
\end{subarray}}
a_\mu \chi^{\mu}_{\lambda} \notag
\end{equation}
with $a_\mu \in \mathbb{Z}$. Then 
\begin{align}
\displaystyle\sum_{\begin{subarray}{1} 
1<k<n \\ 
d_k \not = 1
\end{subarray}} 
\displaystyle\sum_{\lambda \vdash n} \chi^{(2n-2,2)}_{((\frac{n}{d_k})^{d_k},\lambda)} |K_\lambda| &= \displaystyle\sum_{\lambda \vdash n} \frac{n}{2}\chi^{(n)} |K_\lambda| + \displaystyle\sum_{\lambda \vdash n} \displaystyle\sum_{\begin{subarray}{1}
\mu \vdash n \\
\mu \not = (n) 
\end{subarray}}
a_\mu \chi^{\mu}_{\lambda} |K_\lambda|  \notag \\
&=  \frac{n}{2} n! \langle \chi^{(n)} , \chi^{(n)} \rangle_{S_n} + \displaystyle\sum_{\begin{subarray}{1}
\mu \vdash n \\
\mu \not = (n) 
\end{subarray}} 
a_\mu \langle \chi^{(n)}, \chi^\mu \rangle_{S_n}  \notag \\
&= \frac{n}{2} n!. \label{10}
\end{align}

\noindent So in the case when $n$ is even, substituting (\ref{8}), (\ref{9}), and (\ref{10}) into (\ref{7}), we have 
\begin{equation}
\langle \phi, \chi^{(2n-2,2)} \rangle_{S_{2n}}
= \frac{1}{n \ n!}\left[\binom{n}{2}n! + \frac{n}{2}n! \right]
= \frac{1}{n} \left[ \binom{n}{2} + \frac{n}{2} \right] 
= \frac{1}{n} \left[ \frac{n(n-1)}{2} + \frac{n}{2} \right]
= \frac{n-1}{2} + \frac{1}{2}
= \frac{n}{2} \notag
\end{equation}

\noindent as desired. Now, when $n$ is odd, $2$ does not divide $n$. Then $\frac{n}{2}$ is not an integer and thusly does not divide $n$. As a result, we cannot remove the hook of length 2 from the bottom row of $(2n-2,2)$. So when we apply Theorem \ref{mnrule}, the trivial character does not appear in the decomposition and we have
\begin{equation}
\displaystyle\sum_{\begin{subarray}{1} 
1<k<n \\ 
d_k \not = 1 
\end{subarray}} 
\chi^{(2n-2,2)}_{((\frac{n}{d_k})^{d_k},\lambda)} = \displaystyle\sum_{\begin{subarray}{1}
\mu \vdash n \\
\mu \not = (n) 
\end{subarray}}
c_\mu \chi^{\mu}_{\lambda} \notag
\end{equation}
with $c_\mu \in \mathbb{Z}$. Then 
\begin{equation}
\displaystyle\sum_{\begin{subarray}{1}
1<k<n \\ 
d_k \not = 1
\end{subarray}} 
\displaystyle\sum_{\lambda \vdash n} \chi^{(2n-2,2)}_{((\frac{n}{d_k})^{d_k},\lambda)} |K_\lambda| = \displaystyle\sum_{\lambda \vdash n} \displaystyle\sum_{\begin{subarray}{1}
\mu \vdash n \\
\mu \not = (n) 
\end{subarray}}
c_\mu \chi^{\mu}_{\lambda} |K_\lambda| =  \displaystyle\sum_{\begin{subarray}{1}
\mu \vdash n \\
\mu \not = (n) 
\end{subarray}} 
n! c_\mu \langle \chi^{(n)}, \chi^\mu \rangle_{S_n} = 0. \label{11}
\end{equation}
So then substituting (\ref{8}), (\ref{9}), and (\ref{11}) into (\ref{7}), we have
\begin{equation}
\langle \phi, \chi^{(2n-2,2)} \rangle_{S_{2n}}
= \frac{1}{n \ n!}\binom{n}{2}n!
= \frac{1}{n} \binom{n}{2} 
= \frac{1}{n} \frac{n(n-1)}{2}
= \frac{n-1}{2} \notag
\end{equation}
giving the result.
\end{proof}

\subsection{A theorem for the partitions $(2n-k,k)$}
We now present a theorem that generalizes the previous propositions and gives a formula for the multiplicities of a number of the irreducible characters of $S_{2n}$ appearing in the decomposition of $\phi$. 

\begin{theorem}
Let $n \geq 2k$. Let $\chi^{(2n-k,k)}$ be the irreducible character of $S_{2n}$ corresponding to $(2n-k,k)$. For $1<h<n$, let $d_h=gcd(n,h)$, and $l_k=\frac{k d_h}{n}$. Then
\begin{equation}
\langle \phi, \chi^{(2n-k,k)} \rangle_{S_{2n}} =\frac{1}{n} \bigg[ \binom{n}{k} + \displaystyle\sum_{\begin{subarray}{1}
1<h<n \\
d_h \not = 1 \\
\frac{n}{d_h} \big| k
\end{subarray}}
\binom{d_h}{l_k}\bigg]. \notag
\end{equation} 
\end{theorem}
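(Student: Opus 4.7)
The plan is to mimic the pattern of Propositions \ref{P:1}--\ref{P:3}. By Frobenius reciprocity and the decomposition $C \cong \langle \sigma \rangle \times S_n$, I would first write
\begin{equation}
\langle \phi, \chi^{(2n-k,k)} \rangle_{S_{2n}} = \frac{1}{n \, n!} \sum_{h=0}^{n-1} \sum_{\pi \in S_n} \chi^{(2n-k,k)}\bigl((\sigma^h,\pi)\bigr), \notag
\end{equation}
noting that $(\sigma^h,\pi)$ has cycle type $((n/d_h)^{d_h},\lambda)$ in $S_{2n}$, where $\lambda$ is the cycle type of $\pi$. Then separate the $h=0$ piece from the $h \geq 1$ terms, and, as in the earlier proofs, collapse each Murnaghan--Nakayama expansion against $\sum_\lambda |K_\lambda|$ using the fact that $\langle \chi^\mu, \chi^{(n)} \rangle_{S_n}$ is $1$ if $\mu = (n)$ and $0$ otherwise.

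For the $h=0$ piece, $(1,\pi)$ acts as $\pi$ on $\{n+1,\ldots,2n\}$ and trivially elsewhere, so
\begin{equation}
\sum_{\pi \in S_n} \chi^{(2n-k,k)}\bigl((1,\pi)\bigr) = n!\, \bigl\langle \chi^{(n)}, \chi^{(2n-k,k)}\!\!\downarrow_{S_n}\bigr\rangle_{S_n}. \notag
\end{equation}
Iterating Theorem \ref{branchingrule} expresses this inner product as the number of standard Young tableaux of skew shape $(2n-k,k)/(n)$. Because $n \geq 2k$, this skew shape splits into two disconnected horizontal strips of sizes $n-k$ (top) and $k$ (bottom), and such a tableau is determined by choosing which $k$ of the entries $1,\ldots,n$ populate the bottom strip. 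So this term contributes $\binom{n}{k}$ to the bracket.

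For each $h$ with $1 \leq h \leq n-1$, set $m = n/d_h$. Iterated application of Theorem \ref{mnrule} gives $\chi^{(2n-k,k)}_{((n/d_h)^{d_h},\lambda)} = \sum_\mu c^{(h)}_\mu \chi^\mu_\lambda$, where $c^{(h)}_\mu$ is the signed count of length-$d_h$ sequences of $m$-rim-hook removals from $(2n-k,k)$ ending at $\mu$; summing against $|K_\lambda|$ then picks out $\mu=(n)$ and produces $n! \, c^{(h)}_{(n)}$. To evaluate $c^{(h)}_{(n)}$, I would classify the $m$-rim hooks in a two-row shape $(a,b)$ as (i) a top-row strip taking $(a,b) \to (a-m,b)$, (ii) a bottom-row strip taking $(a,b) \to (a,b-m)$, or (iii) a spanning L-shape taking $(a,b) \to (b-1,a+1-m)$. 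Type (iii) forces the top row down to $b-1 \leq k-1 < n$, after which no later removal can restore a top row of length $n$; hence no spanning hook can appear in any reduction to $(n)$. Writing $s$ for the number of bottom strips and $t$ for the number of top strips used, $s+t=d_h$ and the endpoint $(n,0)$ forces $s = k/m = l_k$ and $t = d_h - l_k$, which requires $(n/d_h) \mid k$. The hypothesis $n \geq 2k$ then ensures that any interleaving of the $s$ bottom and $t$ top moves stays within the partition poset, yielding $\binom{d_h}{l_k}$ valid sequences, each composed of single-row rim hooks of leg length $0$ and sign $+1$. Therefore $c^{(h)}_{(n)} = \binom{d_h}{l_k}$ when $(n/d_h) \mid k$ and $0$ otherwise. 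The $d_h = 1$ cases give $0$ automatically, since $(n/d_h) \mid k$ becomes $n \mid k$, impossible for $1 \leq k \leq n/2$.

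Assembling these contributions and dividing by $n \, n!$ will produce the claimed formula. The main obstacle is the hook analysis in the previous paragraph: (i) the structural argument ruling out spanning rim hooks in any reduction to $(n)$; (ii) the verification, using $n \geq 2k$, that every one of the $\binom{d_h}{l_k}$ interleavings of top- and bottom-strip removals realizes a valid sequence of partition shrinkings; and (iii) the observation that every leg length vanishes, so no sign cancellation occurs in the signed count $c^{(h)}_{(n)}$.
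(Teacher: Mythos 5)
Your proposal is correct and follows essentially the same route as the paper: Frobenius reciprocity, splitting the sum over powers of $\sigma$, the branching rule giving $\binom{n}{k}$ for the identity term, and the Murnaghan--Nakayama rule to extract the coefficient of $\chi^{(n)}$ for the remaining terms. In fact your hook analysis (ruling out spanning rim hooks, checking that every interleaving of top- and bottom-strip removals is valid, and noting that all leg lengths vanish so no signs interfere) supplies details that the paper compresses into ``a simple counting argument.''
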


\begin{proof}
With Frobenius reciprocity, we write
\begin{equation}
\langle \phi, \chi^{(2n-k,k)} \rangle_{S_{2n}} = \langle 1_C, \chi^{(2n-k,k)}\downarrow_{C} \rangle_{C} = \frac{1}{n \ n!} \displaystyle\sum_{j=0}^{n-1} \sum_{\pi \in S_n} \chi^{(2n-k,k)}((\sigma^j,\pi)). \notag
\end{equation}
As usual, we split the sum into three pieces:
\begin{align}
\langle \phi, \chi^{(2n-k,k)}\downarrow_{S_{2n}} \rangle_{S_{2n}} 
&= \frac{1}{n \ n!}\left[ \displaystyle\sum_{\pi \in S_n} \chi^{(2n-k,k)} ((1,\pi)) + \varphi(n)\displaystyle\sum_{\pi \in S_n} \chi^{(2n-k,k)}((\sigma,\pi)) + \displaystyle\sum_
{\begin{subarray}{1}
1<h<n \\
d_h \not = 1
\end{subarray}}
\sum_{\pi \in S_n} \chi^{(2n-k,k)} ((\sigma^h, \pi)) \right] \notag \\
&= \frac{1}{n \ n!}\left[ n!\langle \chi^{(n)}, \chi^{(2n-k,k)}\downarrow_{S_n} \rangle_{S_n} + \varphi(n)\displaystyle\sum_{\lambda \vdash n} \chi^{(2n-k,k)}_{(n,\lambda)} |K_\lambda|  + \displaystyle\sum_
{\begin{subarray}{1}
1<h<n \\
d_h \not = 1
\end{subarray}}
\sum_{\lambda \vdash n} \chi^{(2n-k,k)}_{((\frac{n}{d_h})^{d_h}, \lambda)} |K_\lambda| \right]. \label{12}
\end{align}

\noindent Since $n \geq 2k$, we can remove the $k$ blocks from the bottom row of $(2n-k,k)$ and remove $n-k$ blocks from the top row of $(2n-k,k)$, which leaves $n$ blocks remaining. We can do this removal in $\binom{n}{k}$ ways, so with $\ref{branchingrule}$, we have
\begin{equation}
\langle \chi^{(n)}, \chi^{(2n-k,k)}\downarrow_{S_n} \rangle_{S_n} = \binom{n}{k}. \label{13}
\end{equation}
Theorem \ref{mnrule} gives
\begin{equation}
\chi^{(2n-k,k)}_{(n,\lambda)} = \chi^{(n-k,k)}_{\lambda} \notag
\end{equation}

\noindent since $n \geq 2k$. Then
\begin{equation}
\displaystyle\sum_{\lambda \vdash n} \chi^{(2n-k,k)}_{(n,\lambda)} |K_\lambda| = \displaystyle\sum_{\lambda \vdash n} \chi^{(n-k,k)}_{\lambda} |K_\lambda| = n! \langle \chi^{(n)} , \chi^{(n-k,k)}  \rangle_{S_n} =0.  \label{14}
\end{equation}

\noindent Now suppose there is some $h$ so that $d_h \not = 1$. Then $\sigma^h$ is a product of $d_h$ $\frac{n}{d_h}$-cycles. If $\pi$ is of cycle type $\lambda$ then
\begin{equation}
\chi^{(2n-k,k)}((\sigma^h,\pi))=\chi^{(2n-k,k)}_{((\frac{n}{d_h})^{d_h},\lambda)}. \label{15}
\end{equation}
By Theorem \ref{mnrule}, in order for $\chi^{(n)}$ to have non-zero multiplicity in the decomposition of $\chi^{(2n-k,k)}_{((\frac{n}{d_h})^{d_h},\lambda)}$, we have to be able to remove the $k$-hook from the bottom row of $(2n-k,k)$. So if $\frac{n}{d_h}$ does not divide $k$ then this is not possible. Then in this case
\begin{equation}
\chi^{(2n-k,k)}_{((\frac{n}{d_h})^{d_h},\lambda)} = \displaystyle\sum_{\begin{subarray}{1}
\ \ \mu \vdash n \\
\ \ \mu \not = (n) 
\end{subarray}}
a_\mu \chi^{\mu}_{\lambda} \label{16}
\end{equation}

\noindent with $a_\mu \in \mathbb{Z}$. Now, suppose that for some $h$, $d_h \not = 1$ and furthermore that $\frac{n}{d_h}$ divides $k$. Then we can successively remove the $l_k$ hooks of length $\frac{n}{d_h}$ from the bottom row of $(2n-k,k)$ and remove the $d_h-l_k$ hooks of length $\frac{n}{d_h}$ from the top row of $(2n-k,k)$, which will result in $\chi^{(n)}$ having positive multiplicity in the aforementioned decomposition. In fact, a simple counting argument via Theorem \ref{branchingrule} shows the exact multiplicity will be $\binom{d_h}{l_k}$. Then in this case
\begin{equation}
\chi^{(2n-k,k)}_{((\frac{n}{d_h})^{d_h},\lambda)} = \binom{d_h}{l_k}\chi^{(n)}_{\lambda} + \displaystyle\sum_{\begin{subarray}{1}
\ \ \mu \vdash n \\
\ \ \mu \not = (n) 
\end{subarray}}
c_\mu \chi^{\mu}_{\lambda} \label{17}
\end{equation}
with $c_\mu \in \mathbb{Z}$. Then (\ref{16}) and (\ref{17}) give

\begin{align}
\displaystyle\sum_
{\begin{subarray}{1}
1<h<n \\
d_h \not = 1
\end{subarray}}
\sum_{\lambda \vdash n} \chi^{(2n-k,k)}_{((\frac{n}{d_h})^{d_h}, \lambda)} |K_\lambda| 
&=  \displaystyle\sum_
{\begin{subarray}{1}
1<h<n \\
d_h \not = 1 \\
\frac{n}{d_h} \nmid k 
\end{subarray}}
\displaystyle\sum_{\lambda \vdash n} \displaystyle\sum_{\begin{subarray}{1}
\ \ \mu \vdash n \\
\ \ \mu \not = (n) 
\end{subarray}}
a_\mu \chi^{\mu}_{\lambda} |K_\lambda| + \displaystyle\sum_
{\begin{subarray}{1}
1<h<n \\
d_h \not = 1 \\
\frac{n}{d_h} \big| k
\end{subarray}}
\displaystyle\sum_{\lambda \vdash n}  \binom{d_h}{l_k}\chi^{(n)}_{\lambda} |K_\lambda| + \displaystyle\sum_
{\begin{subarray}{1}
1<h<n \\
d_h \not = 1 \\
\frac{n}{d_h} \big| k
\end{subarray}}
\displaystyle\sum_{\lambda \vdash n} \displaystyle\sum_{\begin{subarray}{1}
\ \ \mu \vdash n \\
\ \ \mu \not = (n) 
\end{subarray}}
c_\mu \chi^{\mu}_{\lambda} |K_\lambda| \notag \\
&= \displaystyle\sum_
{\begin{subarray}{1}
1<h<n \\
d_h \not = 1 \\
\frac{n}{d_h} \big| k
\end{subarray}}
\displaystyle\sum_{\lambda \vdash n}  \binom{d_h}{l_k}\chi^{(n)}_{\lambda} |K_\lambda| \notag \\
&= \displaystyle\sum_
{\begin{subarray}{1}
1<h<n \\
d_h \not = 1 \\
\frac{n}{d_h} \big| k
\end{subarray}} \binom{d_h}{l_k} n!. \label{18}
\end{align} 

\noindent Substituting (\ref{13}), (\ref{14}), (\ref{18}) into (\ref{12}) we have
\begin{equation}
\langle \phi, \chi^{(2n-k,k)} \rangle_{S_{2n}} = \frac{1}{n \ n!} \bigg[ \binom{n}{k}n! + \displaystyle\sum_
{\begin{subarray}{1}
1<h<n \\
d_h \not = 1\\
\frac{n}{d_h} \big| k
\end{subarray}} \binom{d_h}{l_k} n! \bigg] = \frac{1}{n} \bigg[ \binom{n}{k} + \displaystyle\sum_
{\begin{subarray}{1}
1<h<n \\
d_h \not = 1\\
\frac{n}{d_h} \big| k
\end{subarray}} \binom{d_h}{l_k} \bigg]
\end{equation}
as claimed.
\end{proof}

\section{Future problems} The preceding work is only the beginning of a large selection of problems to be worked out. It is possible that there are more stable multiplicities (independent of $n$) in this decomposition. Also, the multiplicities and formulas found here only cover a small number of partitions and therefore characters. One may find that \emph{all} characters have a stable or closed form formula for their multiplicities. Note that in this paper we only discuss the trivial character of $C$, and much can be learned from studying the decomposition of the non-trivial characters of $C$ when induced up to $S_{2n}$, which arise in braid group cohomology. It may be possible to learn more by first decomposing the character $\ind_{C}^{S_n} \psi$, studying this character, and then inducing the resulting constituents up to $S_{2n}$.

\end{document}